\documentclass[12pt]{article}

\usepackage{amsthm}
\usepackage{amssymb}
\usepackage{amsmath}
\usepackage{comment}
\usepackage{thm-restate}
\usepackage{url} 
\usepackage{hyperref}
\usepackage[noabbrev,capitalise]{cleveref}

\usepackage{color}
\usepackage[normalem]{ulem}

\usepackage[margin=1in]{geometry}
%Sergey's macros

\renewcommand{\v}{\textup{\textsf{v}}}
\newcommand{\e}{\textup{\textsf{e}}}
\renewcommand{\d}{\textup{\textsf{d}}}

\theoremstyle{plain}
\newtheorem{thm}{Theorem}[section]
\newtheorem{lem}[thm]{Lemma}
\newtheorem{claim}{Claim}[thm]
\newtheorem{proposition}[thm]{Proposition}

\newtheorem{conj}[thm]{Conjecture}

%Luke's macros

%
{\noindent \emph{Proof.} {}{#1}{}}{\hfill
	$\Diamond$\vspace{1em}}

% for specifying a name
\theoremstyle{plain} % just in case the style had changed
\newcommand{\thistheoremname}{}
\newtheorem{genericthm}[section]{\thistheoremname}

\theoremstyle{definition}
\newtheorem{definition}[thm]{Definition}

\title{Improved bound for Hadwiger's conjecture}

\author{ Yan Wang }

\begin{document}

\maketitle

\begin{abstract}
Hadwiger conjectured in 1943 that for every integer $t \ge 1$, every graph with no $K_t$ minor is $(t-1)$-colorable. 
Kostochka, and independently Thomason, proved every graph with no $K_t$ minor is $O(t(\log t)^{1/2})$-colorable.
Recently, Postle improved it to $O(t (\log \log t)^6)$-colorable. 
In this paper, we show that every graph with no $K_t$ minor is $O(t (\log \log t)^{5})$-colorable.
\end{abstract}

\section{Introduction}

Let $G$ be a graph.
A minor of $G$ is a graph obtained from $G$ by contracting edges, deleting edges and deleting isolated vertices. 
Minors play an important role in topological graph theory. 
In 1937, Wagner \cite{Wag37} showed that a graph is planar if and only if the complete graph on five vertices $K_5$ and the complete bipartite graph with three vertices in each partition $K_{3,3}$ are not minors of $G$.
In 1943, Hadwiger \cite{Had43} conjectured the following.

\begin{conj}[Hadwiger's conjecture~\cite{Had43}]\label{Hadwiger} For every integer $t \geq 1$, every graph with no $K_{t}$ minor is $(t-1)$-colorable. 
\end{conj}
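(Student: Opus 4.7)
The plan is to attempt Hadwiger's conjecture itself by induction on $t$, with base cases $t \le 6$ handled by Wagner's characterization of planar graphs (for $t \le 5$) and the Robertson--Seymour--Thomas reduction of the $K_6$-minor-free case to the Four Color Theorem. For the inductive step I would take a minimal counterexample $G$: a graph with no $K_t$ minor whose chromatic number is at least $t$, chosen so that $|V(G)| + |E(G)|$ is as small as possible. The goal is to derive a contradiction by exhibiting a $K_t$ minor inside $G$.

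Standard reductions force $G$ to be highly structured. If $G$ had a separator of size less than $t-1$ that did not induce a clique, one could $(t-1)$-color each side by the inductive hypothesis and glue along the separator, so $G$ is essentially $(t-1)$-connected. If some edge $uv$ were not contraction-critical, one could $(t-1)$-color $G/uv$ by minimality and lift the coloring by giving $u$ and $v$ the same color; hence every edge of $G$ is contraction-critical, and $G/uv$ contains a $K_t$ minor for each $uv \in E(G)$. This yields, for every edge $uv$, a family of $t-1$ pairwise adjacent connected subgraphs of $G - \{u,v\}$ each adjacent to both $u$ and $v$, a kind of ``rooted $K_{t-1}$ minor'' attached to every edge of $G$. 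Degree bounds of Mader type then give minimum degree at least $t-1$, and in fact force $G$ to be dense and locally rich in triangles.

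The heart of the argument would be to combine these local rooted minors, available at every edge, into a single global $K_t$ minor. One natural attempt is probabilistic: randomly contract a carefully chosen set of vertices and show that, with positive probability, one rooted $K_{t-1}$ minor extends to a $K_t$ minor using the surviving connectivity. A complementary attempt is density-theoretic: contraction-criticality and $(t-1)$-connectivity force $G$ to carry enough edges that a sufficiently sharp minor-extraction theorem produces a $K_t$ minor directly. Either route culminates in an extremal statement of the form ``average degree of order $t$ suffices for a $K_t$ minor under mild extra hypotheses.''

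The main obstacle --- and the reason the conjecture has resisted resolution since 1943 --- lies precisely in that last step. The Kostochka--Thomason theorem only guarantees a $K_t$ minor from average degree $\Omega(t\sqrt{\log t})$, Postle lowers this to $\Omega(t(\log\log t)^6)$, and the present paper further to $\Omega(t(\log\log t)^5)$, but none of these produce a $K_t$ minor from density of order $t$. Closing this residual $(\log\log t)^{O(1)}$ gap, or replacing density-based extraction by a purely structural argument that exploits contraction-criticality beyond its numerical consequences, is the decisive difficulty; any genuine proof of the conjecture exactly as worded would have to overcome it, and it is not an obstacle I see how to resolve within the techniques available here.
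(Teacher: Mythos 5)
The statement you were asked to prove is Hadwiger's conjecture, which is presented in the paper as a \emph{conjecture}, not a theorem: the paper does not prove it, and it remains open for $t \ge 7$. What the paper actually establishes is \cref{t:ordinaryHadwiger3}, an $O(t(\log\log t)^5)$ upper bound on the chromatic number of $K_t$-minor-free graphs, obtained by combining the new density-increment lemma (\cref{t:newforced}) with \cref{thm:combined} from~\cite{Pos20}. Your proposal correctly recognizes this: it lays out the standard minimal-counterexample reductions (small non-clique separators, contraction-criticality, Mader-type degree bounds) and then candidly identifies that the missing ingredient is a minor-extraction theorem at density $O(t)$, which no known technique supplies. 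So there is no error in your reasoning, but there is also no proof; you have reproduced the well-understood reason the problem is hard rather than resolved it. For the purposes of this exercise, the honest conclusion is that the statement is an open conjecture, the paper makes no claim to prove it, and your writeup should be framed as a discussion of obstructions rather than as a proof attempt of the conjecture itself; the genuinely provable content of the paper lives in \cref{t:newforced}, \cref{DenseSubgraph3}, and \cref{DenseSubgraph2}, which would be the appropriate targets for a proof.
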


Hadwiger's conjecture is a strengthening of the four-color theorem, is probably the most famous open problem in graph theory.
In fact, Hadwiger  \cite{Had43} proved the conjecture for $t \le 4$ and Wagner \cite{Wag37Equiv} established the equivalence of the case when $t=5$ and the four-color theorem.
Robertson, Seymour and Thomas \cite{RST93} showed Hadwiger's conjecture when $t=6$, but it is still open for $t \ge 7$.
For a complete survey and background of Hadwiger's conjecture, we refer the readers to \cite{Sey16Survey}.

Consider the following weakening of Hadwidger's conjecture: What can we show about the chromatic number of graphs with no $K_t$ minor?
Kostochka \cite{Kostochka82, Kostochka84} and Thomason \cite{Thomason84} showed that every graph with no $K_t$ minor 
is $O(t(\log t)^{1/2})$-degenerate, and thus is $O(t(\log t)^{1/2})$-colorable. 
Recently, Norin, Song and Postle \cite{NPS19} improved it to $O(t(\log t)^{\beta})$ for every $\beta > 1/4$.
Subsequently, it is further improved to $O(t(\log \log t)^{18})$ in \cite{Pos20} and $O(t(\log \log t)^{6})$ in \cite{Pos20LogLog}.
It is conjectured in \cite{Kaw07,KawMoh06,ReeSey98} that there exists a constant $C > 0$ such that for every integer $t \ge 1$, every graph with no $K_t$ minor is $Ct$-colorable. 
In this paper, we show the following.

\begin{thm}\label{t:ordinaryHadwiger3}
Every graph with no $K_t$ minor is $O(t (\log \log t)^{5})$-colorable.
\end{thm}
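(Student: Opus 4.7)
The plan is to follow and refine the iterative framework developed by Postle in \cite{Pos20LogLog}, whose argument establishes an $O(t(\log\log t)^6)$ coloring bound, with the goal of shaving off a single factor of $\log\log t$. Suppose for contradiction that $G$ is a counterexample: $\chi(G) > C t (\log\log t)^{5}$ for a sufficiently large absolute constant $C$, and $G$ has no $K_t$ minor. We may pass to a vertex-critical subgraph, so the minimum degree is at least $\chi(G)-1$. The overall strategy is bootstrapping: starting from the Kostochka–Thomason bound $O(t\sqrt{\log t})$, one iteratively improves the coloring bound, and the key technical input is a \emph{cluster minor lemma} that, given a graph of large chromatic number with no $K_t$ minor, extracts a minor on $O(t \cdot \mathrm{polylog}(t))$ vertices whose chromatic number is almost as large. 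Crucially, this operates at the level of chromatic number directly, never using the (false) strategy of reducing to the extremal function.

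The concrete sequence of steps I would follow is as follows. First, using Mader-type connectivity reductions, pass from $G$ to a subgraph $H$ of sufficiently high connectivity, roughly $\Omega(\chi(G)/\mathrm{polylog})$, while losing only a constant factor in the chromatic number. Second, partition $H$ into connected pieces of bounded radius using a randomized BFS-type procedure, contract each piece to a vertex of a minor $H'$, and verify via a weighting/Azuma argument that $\chi(H')$ remains within a $(1-o(1))$ factor of $\chi(H)$; this is the cluster-minor step. Third, feed $H'$ back into the induction: since $H'$ has many fewer vertices but comparable chromatic number, it is much denser per vertex, and after a bounded number of such rounds one either exposes a $K_t$ minor directly or reaches a contradiction with a stronger inductive hypothesis on smaller $t'$. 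The polylog exponent in the final bound is determined by how many $\log\log t$ factors are lost per round, multiplied by the number of rounds of the bootstrap.

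The main obstacle, and the only place where the new saving can come from, is the chromatic-number accounting within a single iteration of the bootstrap. In Postle's $O(t(\log\log t)^6)$ argument, the six factors split between the connectivity reduction, the cluster construction, and the balancing of the sub-parameters controlling radius versus connectivity. My plan is to merge the connectivity reduction and the cluster construction into a single joint step: perform the BFS-partition directly inside a graph that is only moderately connected but has a large enough supply of disjoint short paths, and bound the chromatic-number drop in one shot rather than in two. This requires a refined probabilistic analysis showing that the cluster contraction preserves $(1-1/\log\log t)$ of the chromatic number even without first boosting connectivity to its full target, saving exactly one factor. Once this merged estimate is in place, plugging it into Postle's recursion with the Kostochka–Thomason bound as the base case yields the $O(t(\log\log t)^{5})$ coloring bound asserted in \Cref{t:ordinaryHadwiger3}; the remaining calculations mirror those of \cite{Pos20LogLog} with one fewer $\log\log t$ factor in the final product.
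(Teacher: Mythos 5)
Your proposal takes a genuinely different route from the paper, and it contains a serious gap.

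\textbf{Where you diverge.} The paper does not touch the chromatic-number-preserving machinery of Postle (Mader connectivity reductions, BFS cluster contractions, the Azuma-type concentration arguments). All of that is imported verbatim as a black box via \cref{thm:combined} (Theorem~2.2 of \cite{Pos20}), which reduces the coloring bound to the shape of a single function $g$ appearing in a \emph{density increment lemma}: given a graph of density $d$ and a target density $D = s\cdot d$, one must produce either a minor of density at least $D$ or a small subgraph of density at least $d/g(s)$ on at most $g(s)D^2/d$ vertices. In \cite{Pos20LogLog} one has $g(s) = C(1+\log s)^6$; the paper's entire contribution is a sharper density increment lemma (\cref{t:newforced}) with $g(s) = C(1+\log s)^5$. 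That improvement is a pure extremal/density argument built from an ``unmated'' decomposition, clean star-forest contractions, and a three-way case analysis (\cref{DenseSubgraph3}, \cref{DenseSubgraph2}). No probabilistic or chromatic-number bookkeeping is involved.

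\textbf{The gap.} Your plan hinges on a single asserted-but-unproved claim: that the connectivity-reduction step and the cluster-contraction step of \cite{Pos20LogLog} can be merged into one joint step that still preserves a $(1 - 1/\log\log t)$ fraction of the chromatic number, thereby saving one factor of $\log\log t$. You offer no argument for this --- the ``refined probabilistic analysis'' is named but not carried out, and there is no accounting of why a moderately connected graph has a ``large enough supply of disjoint short paths'' to make the BFS contraction work without first boosting connectivity. This is precisely the kind of claim that needs a lemma and a calculation, not a sentence. Moreover, your diagnosis of where the six $\log\log t$ factors live appears to be off: in the actual chain of reductions they arise from the exponent in the density increment function $g$, not from a split among connectivity reduction, cluster construction, and radius/connectivity balancing. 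As it stands, your proposal reduces the theorem to an unverified hypothesis about the internals of \cite{Pos20LogLog} rather than to anything proved.
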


Our main contribution is the following improvement to the density increment lemma.

\begin{restatable}{lem}{Increment}\label{t:newforced} There exists a constant $C=C_{\ref{t:newforced}} > 0$ such that the following holds. Let $G$ be a graph with $\d(G) \ge C$, and let $D > 0$ be a constant. Let $s=D/\d(G)$ and let $g_{\ref{t:newforced}}(s) := C(1+ \log s)^{5}$.  Then $G$ contains at least one of the following: 
\begin{description}
		\item[(i)] a minor $J$ with $\d(J) \geq D$, or
		\item[(ii)] a subgraph $H$ with $\v(H) \leq g_{\ref{t:newforced}}(s) \cdot \frac{D^2}{\d(G)}$ and $\d(H) \geq \frac{\d(G)}{g_{\ref{t:newforced}}(s)}$.
	\end{description}  
\end{restatable}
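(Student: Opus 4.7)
The plan is to follow the iterative two-alternative framework underlying Postle's earlier proofs of the density increment lemma (which give $g(s) = C(1+\log s)^{18}$ and then $C(1+\log s)^{6}$), and to save one logarithmic factor in the bookkeeping of a single ``one-step'' density increment that will be applied $\Theta(\log s / \log\log s)$ times. Throughout I write $d := \d(G)$ and assume $s := D/d$ is large; otherwise $H := G$ itself trivially satisfies (ii). The first step is to pass to the $\lceil d/4\rceil$-core so that $G$ may be assumed to have minimum degree $\Theta(d)$ at a constant-factor cost.

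The core engine I would build is a \emph{one-step increment} of roughly the following shape: either $G$ admits a minor of density $\Omega(d \cdot (\log s)^{1/k})$ for a suitable $k$, or $G$ contains a subgraph $G'$ with $\v(G') \leq \v(G) / \Omega((\log s)^{1/k})$ and $\d(G') \geq \d(G)/C'$ for an absolute constant $C'$. This I would prove by the connected-matching / BFS-expansion method that is standard in this area: explore disjoint BFS-balls from many carefully chosen centers and track their growth. If enough balls expand at the expected rate and can be pairwise joined by edges, contracting each ball to a single vertex yields the dense minor; any failure to expand at some radius exhibits a small dense subgraph, which we take as $G'$.

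Iterating the one-step increment at most $\Theta(\log s / \log\log s)$ times produces either the minor $J$ in (i) or the subgraph $H$ in (ii). Because each step multiplies the density by roughly $(\log s)^{1/k}$ while dividing the vertex count by approximately the same factor, a telescoping accounting across all iterations yields a cumulative density loss of $(\log s)^{5}$ and a final vertex count bounded by $g_{\ref{t:newforced}}(s) \cdot D^2/\d(G)$, as required. The remaining work is to verify that the subgraph returned by the recursion at termination already obeys these bounds in the worst case, which is a routine amortization.

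The main obstacle is proving the one-step increment with the \emph{improved} constant --- that is, saving one $\log s$ factor in the per-step loss compared to the $(\log\log t)^{6}$ proof. I expect this saving to come from collapsing two previously-sequential operations into a single sweep: for instance, integrating the density / core decomposition directly into the BFS exploration, so that the partition into dense regions and the expansion certificate are produced simultaneously rather than composed. Equivalently, a sharper random-sampling and union-bound argument may allow one to both locate and contract expanding balls in one pass. Ensuring that this merged procedure still delivers a large expansion \emph{and} a usable connected-matching structure, without reintroducing the logarithmic slack that the previous separation incurred, is where the technical heart of the argument lies.
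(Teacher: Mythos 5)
Your outline invokes the right high-level shape --- iterate a one-step density increment roughly $\Theta(\log s/\log\log s)$ times --- but the increment you describe is structurally wrong and the technical content is never supplied. You state the one-step alternative as ``a minor of density $\Omega(d(\log s)^{1/k})$, or a subgraph $G'$ with $\v(G')\le \v(G)/\Omega((\log s)^{1/k})$ and $\d(G')\ge d/C'$,'' and then propose to iterate on the second branch as well. That cannot work: iterating a constant-factor density loss $\Theta(\log s/\log\log s)$ times destroys $s^{\Theta(1/\log\log s)}$ of the density, far more than the allowed $(\log s)^{5}$. The paper's one-step dichotomy (Lemma~\ref{DenseSubgraph2}) has a different and essential shape: the ``dense subgraph'' branch is \emph{terminating}, producing $H$ with $\v(H)\le 12k^{3}d$ --- bounded in terms of $d$, not in terms of $\v(G)$ --- and $\d(H)\ge d/(24k^{5})$, which already certifies conclusion (ii); recursion happens only on the $m$-bounded \emph{minor} branch, where density is multiplied by $m\approx k/6\approx\log s$ in one shot (not $(\log s)^{1/k}$). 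Your own follow-up sentence (``each step multiplies the density by $(\log s)^{1/k}$ while dividing the vertex count by the same factor'') then contradicts the subgraph alternative you just wrote, so the amortization you call ``routine'' has no coherent object to amortize over.

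Beyond the bookkeeping, the key lemma is not proved. You explicitly defer the improved per-step constant to an unspecified ``merged procedure'' combining core decomposition with BFS exploration. The paper does not use BFS balls or connected matchings at all here; its engine is the unmated/$(c,d)$-clean star-forest framework of Lemma~\ref{DenseSubgraph3} (maximal $(K,d)$-small, $(c,d)$-clean star forest, control of mates, bounding $|A'|$, and the $k$-bounded contraction $G_0=G/E(F)$), combined with Lemma~\ref{kclawDense} for the bipartite case. Your proposal neither reproduces these claims nor offers a concrete substitute, and it also omits the back-translation step --- converting a subgraph $H'$ of the $m$-bounded minor $G'$ into a subgraph $H$ of $G$ at a factor-$m$ cost in both $\v$ and $\d$, then checking $g(s')/(1-30/m)\le g(s)$ --- which is precisely where the exponent $5$ is won. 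As written this is a research plan with inconsistent arithmetic, not a proof.
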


We need the following theorem proved in~\cite{Pos20}.

\begin{lem}[Theorem 2.2 in~\cite{Pos20}]\label{thm:combined}
Every graph with no $K_t$ minor has chromatic number at most  
$$O\left(t \cdot\left(g_{\ref{t:newforced}}\left(3.2 \cdot \sqrt{\log t}\right) + (\log \log t)^2\right)\right).$$
\end{lem}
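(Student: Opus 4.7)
My plan is to prove Lemma~\ref{thm:combined} by a minimal-counterexample argument combined with a single application of the density increment (of which Lemma~\ref{t:newforced} is the strongest form). First I would fix the target bound $k(t) := C \cdot t \cdot \bigl(g_{\ref{t:newforced}}(3.2\sqrt{\log t}) + (\log\log t)^2\bigr)$ for a sufficiently large absolute constant $C$, and assume for contradiction that some graph $G$ with no $K_t$ minor has $\chi(G) > k(t)$. Replacing $G$ by a $(k(t)+1)$-critical subgraph, I may assume $\delta(G) \geq k(t)$, hence $\d(G) \geq k(t)/2$.

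Next I would invoke Lemma~\ref{t:newforced} on $G$ with target density $D = \Theta(t\sqrt{\log t})$, the Kostochka--Thomason threshold above which every graph contains a $K_t$ minor. Outcome (i) would produce a minor of density at least $D$, yielding a $K_t$ minor in $G$ and contradicting our hypothesis; hence outcome (ii) must hold, giving a subgraph $H \subseteq G$ with $\v(H) \leq g_{\ref{t:newforced}}(s) \cdot D^2/\d(G)$ and $\d(H) \geq \d(G)/g_{\ref{t:newforced}}(s)$, where $s = D/\d(G)$. For $C$ large enough, $s \leq 3.2\sqrt{\log t}$, so $g_{\ref{t:newforced}}(s) \leq g_{\ref{t:newforced}}(3.2\sqrt{\log t})$ by monotonicity of $g_{\ref{t:newforced}}$.

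The crux is then to derive a contradiction from the existence of such an $H$. By criticality of $G$, every proper subgraph of $G$ has chromatic number at most $k(t)$, so in particular $G \setminus V(H)$ admits a $k(t)$-colouring. The remaining task is to show that this colouring can be extended across $H$ within the colour budget, using only the structural information that $H$ is small, relatively dense, and $K_t$-minor-free. This step is accomplished by a clustered-colouring / small-expander argument of the type introduced in~\cite{NPS19} and refined in~\cite{Pos20}: the density guarantee on $H$ is converted into an expansion property that lets one re-route colours across $H$, with the cost of this re-routing absorbed into the additive $(\log\log t)^2$ term.

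The main obstacle is precisely this colour-extension step: turning the purely density-theoretic guarantee on $H$ provided by Lemma~\ref{t:newforced} into a combinatorial colouring procedure that stays within the claimed budget. This is where the technical innovations of~\cite{Pos20} lie, and from where the additive $(\log\log t)^2$ correction term in the bound originates; Lemma~\ref{t:newforced} itself is responsible only for the $g_{\ref{t:newforced}}(3.2\sqrt{\log t})$ multiplicative factor, and isolating that responsibility (so that an improved $g_{\ref{t:newforced}}$ automatically improves the final chromatic bound) is the whole reason for stating the density increment and the combined lemma separately.
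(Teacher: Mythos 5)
This lemma is not proved in the paper at all: it is a verbatim citation of Theorem~2.2 of~\cite{Pos20}, which the author intentionally uses as a black box. The whole point of the paper's modular design, which you correctly identify in your last paragraph, is that Theorem~2.2 is stated with the density-increment function as a parameter, so improving $g_{\ref{t:newforced}}$ immediately improves the chromatic bound without reopening~\cite{Pos20}. There is therefore no ``paper's proof'' to compare against; your proposal is an attempt to reconstruct the external result.

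As a reconstruction, your set-up (take a $(k(t)+1)$-vertex-critical subgraph, observe $\d(G)\ge k(t)/2$, apply Lemma~\ref{t:newforced} with $D=\Theta(t\sqrt{\log t})$, rule out outcome~(i) via Kostochka--Thomason) is the right first move and matches how~\cite{Pos20} begins. The gap is in your third paragraph. You frame the remaining work as ``colour $G\setminus V(H)$ using criticality, then extend the colouring across $H$,'' but there is no reason a $k(t)$-colouring of $G\setminus V(H)$ should extend into $H$: a priori $H$ could be adjacent to all colour classes in complicated ways, and the density guarantee on $H$ does not make extension easier --- if anything it makes $H$ harder to colour. The actual mechanism in~\cite{Pos20} is different: the small dense subgraph produced by the density increment is fed into a separate theorem bounding the chromatic number of $K_t$-minor-free graphs on few vertices (roughly, $\v(G)\le rt$ and no $K_t$ minor implies $\chi(G)=O(t(1+\log r)^2)$), and this small-graph theorem, not any extension step, is the source of the additive $(\log\log t)^2$ term; the two ingredients are combined in an inductive argument rather than a single application followed by patching. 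Deferring this entirely to ``a clustered-colouring / small-expander argument of the type introduced in~\cite{NPS19}'' leaves the crux unproved and, more importantly, the specific extension framing you propose would not go through as written.
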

It is easy to see that Lemmas~\ref{t:newforced} and~\ref{thm:combined} imply Theorem~\ref{t:ordinaryHadwiger3}.

\subsection{Notations}

Let $G$ be a graph.
Let $V(G)$ and $E(G)$ be the vertex set and the edge set of $G$ respectively. 
We write the number of vertices $\v(G) = |V(G)|$, the number of edges $\e(G) = |E(G)|$ and density $\d(G) = \e(G) / \v(G)$.
For a vertex $v \in V(G)$, let ${\deg}_G(v)$ be the degree of $v$ in $G$ and $N_G(v)$ be the neighbourhood of $v$ in $G$.
We use $\delta(G)$ to denote the minimum degree of $G$.
For $S \subseteq E(G)$, we denote $G/S$ to be the graph obtained from $G$ by contracting all the edges in $S$.
For $A \subseteq V(G)$, we denote $G[A]$ to be the induced subgraph of $G$ on vertex set $A$.

\section{Preliminaries}\label{s:force}

In this section, we introduce some definitions and lemmas from \cite{Pos20LogLog}.

\begin{definition}
Let $G$ be a graph, and let $K,d\ge 1$, $\varepsilon \in (0,1)$ be real.
We say that 
\begin{itemize} 
\item a vertex of $G$ is \emph{$(K,d)$-small} in $G$ if ${\deg}_G(v) \le Kd$ and \emph{$(K,d)$-big} otherwise; 
\item two vertices of $G$ are \emph{$(\varepsilon,d)$-mates} if they have at least $\varepsilon d$ common neighbours; 
\item $G$ is \emph{$(K,\varepsilon_1, \varepsilon_2, d)$-unmated} if  every $(K,d)$-small vertex in $G$ have strictly fewer than $\varepsilon_1 d$  $(\varepsilon_2,d)$-mates. 
\end{itemize}
\end{definition}

If a graph is not unmated, it must contain a dense subgraph as shown by the following proposition (see Proposition 3.2 in \cite{Pos20LogLog}).

\begin{proposition}[\cite{Pos20LogLog}]\label{SmallDense}
	For all $K, d\ge 1$, $\varepsilon_1,\varepsilon_2 \in (0,1)$ and every graph $G$ at least one of the following holds:
	\begin{description}
		\item[(i)] there exists a subgraph $H$ of $G$ with $\v(H) \le 3Kd$ and $\e(H)\ge \varepsilon_1\cdot \varepsilon_2 \cdot \frac{d^2}{2}$, or
		\item[(ii)] $G$ is $(K,\varepsilon_1,\varepsilon_2,d)$-unmated.
	\end{description}
\end{proposition}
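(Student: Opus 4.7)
The plan is to prove the contrapositive. Assume $G$ is not $(K,\varepsilon_1,\varepsilon_2,d)$-unmated; I will produce a subgraph $H$ satisfying (i). By negating the unmated definition, there is a $(K,d)$-small vertex $v \in V(G)$ that has at least $\varepsilon_1 d$ vertices as $(\varepsilon_2,d)$-mates. Fix such a $v$ and choose a set $S$ of exactly $\lceil \varepsilon_1 d \rceil$ of these mates.

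I would take $H := G[N_G(v) \cup S]$, the subgraph induced by the neighbourhood of $v$ together with the selected mates. Since $v$ is $(K,d)$-small, $|N_G(v)| \le Kd$, so
\[
\v(H) \;\le\; |N_G(v)| + |S| \;\le\; Kd + \lceil \varepsilon_1 d \rceil.
\]
A short case-check using $K,d \ge 1$ and $\varepsilon_1 \in (0,1)$ (splitting on $\varepsilon_1 d \ge 1$ versus $\varepsilon_1 d < 1$, the latter forcing $\lceil \varepsilon_1 d \rceil = 1$) gives $\v(H) \le 3Kd$. For the edge bound, the mate condition gives $|N_G(u) \cap N_G(v)| \ge \varepsilon_2 d$ for each $u \in S$, and these common neighbours lie in $N_G(v) \subseteq V(H)$, so each $u \in S$ contributes at least $\varepsilon_2 d$ distinct edges of $H$ from $u$ into $N_G(v)$. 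Summing over $u \in S$, each such edge is counted at most twice, only when both endpoints happen to lie in $S$, hence
\[
\e(H) \;\ge\; \tfrac{1}{2}\,|S| \cdot \varepsilon_2 d \;\ge\; \tfrac{\varepsilon_1 \varepsilon_2 d^2}{2},
\]
verifying (i).

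The argument is essentially local bookkeeping around a single ``offending'' small vertex $v$, and I do not expect any genuine obstacle. The one point worth flagging is the factor of $1/2$ in the target density of (i): it is precisely the slack absorbing the potential double-counting of edges with both endpoints in the mate set $S$. Avoiding this would require either partitioning $S$ into the parts inside and outside $N_G(v)$ and counting only crossing edges, or tightening the constant $3$ in the vertex bound; the stated $1/2$ makes the direct construction above go through cleanly.
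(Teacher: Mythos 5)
Your proof is correct, and since the paper only cites this proposition from~\cite{Pos20LogLog} without reproducing a proof, there is no in-paper argument to compare against; but your construction is the natural one that the cited source follows. The contrapositive set-up, the choice $H = G[N_G(v)\cup S]$ around the offending small vertex $v$, the vertex bound $Kd + \lceil \varepsilon_1 d\rceil \le 3Kd$ (with the two cases on whether $\varepsilon_1 d\ge 1$, both using $K,d\ge 1$), and the edge count $\sum_{u\in S}|N_G(u)\cap N_G(v)| \ge |S|\varepsilon_2 d \ge \varepsilon_1\varepsilon_2 d^2$ with a factor $1/2$ absorbing the double-count when both endpoints of an edge lie in $S$ are all sound. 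One small phrasing nit: the double-count actually requires both endpoints to be in $S$ \emph{and} in $N_G(v)$, not merely in $S$ --- but ``at most twice'' is what matters, and that is correct.
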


We need the following definitions about forests and bounded minors. 

\begin{definition}
Let $F$ be a non-empty forest in a graph $G$.
Let $K,k,d,s \ge 1$ be real and let $\varepsilon_2, c \in (0,1)$. We say $F$ is
	\begin{itemize}
		\item \emph{$(K,d)$-small} if every vertex in $V(F)$ is $(K,d)$-small in $G$, and
%		\item \emph{$(\varepsilon_2,d)$-mate-free} if no two distinct vertices in any component of $F$ are $(\varepsilon_2,d)$-mates in $G$, and
		\item \emph{$(c,d)$-clean} if $\e(G) - \e(G/F) \le c\cdot d \cdot \v(F)$,
		%\item \emph{$k$-bounded} if $\v(T)\le k$ for every component $T$ of $F$, and
		%\item a \emph{$(k,p)$-shrubbery} if $k-p < \v(T) \le k$ for every component $T$ of $F$.
	\end{itemize}
\end{definition}

\begin{definition}
Let $H$ and $G$ be two graphs with $V(H) = [h]$.
$(X_1,X_2,\dots,X_{h})$ is a \emph{model of $H$ in $G$} if
	\begin{itemize}
	\item $X_1,X_2,\dots,X_{h}$ are pairwise disjoint subsets of $V(G)$,
	\item $G[X_i]$ is connected for every $i \in [h]$, and
	\item there exists an edge between $X_i$ and $X_j$ in $G$ for every $ij \in E(H)$.
	\end{itemize}
Note that $G$ has an $H$ minor if and only if $G$ contains a model of $H$.
For integer $k \ge 1$, we say $G$ has a \emph{$k$-bounded} $H$ minor if $G$ contains a model of $H$ where $|X_i| \le k$ for $i \in [h]$.
\end{definition}

If a bipartite graph is almost complete on one partition, then it contains either a dense subgraph or a bounded minor with increased density (see Theorem 3.6 in \cite{Pos20LogLog}).

\begin{restatable}[\cite{Pos20LogLog}]{lem}{kClawDense}\label{kclawDense}
Let $K_0, \ell_0 \geq 2$ be  integers with $K_0 \ge \ell_0(\ell_0+1)$, and let $\varepsilon_{1,0}\in \left(0,\frac{1}{\ell_0}\right], \varepsilon_{2,0}\in \left(0,\frac{1}{\ell_0^2}\right]$ and  $d_0 \ge 1/\varepsilon_{2,0}$ be real. Let $G=(A,B)$ be a bipartite graph such that $|A| \ge \ell_0 |B|$ and every vertex in $A$ has at least $d_0$ neighbours in $B$. Then there exists at least one of the following:
	\begin{description}
		\item[(i)] a subgraph $H$ of $G$ with $\v(H) \le 4 K_0 d_0$ and $\e(H)\ge \varepsilon_{1,0}\cdot \varepsilon_{2,0}\cdot d_0^2/2$.
		\item[(ii)] a subgraph $H$ of $G$ with $\v(H) \le 4\ell_0 K_0 d_0$ and $\e(H)\ge \varepsilon_{1,0}^2 \cdot d_0^2/2$.
		\item[(iii)] an $(\ell_0+1)$-bounded minor $H$ of $G$ with $\d(H) \ge \frac{\ell_0^2}{\ell_0+1}\left(1-2\varepsilon_{1,0} - 2\ell_0\varepsilon_{2,0} -\frac{\ell_0}{K_0}\right)d_0$.
	\end{description}
\end{restatable}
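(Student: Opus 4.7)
The plan is to prove the contrapositive: assume that neither \textbf{(i)} nor \textbf{(ii)} holds, and construct an explicit $(\ell_0{+}1)$-bounded model of a minor $H$ of $G$ satisfying \textbf{(iii)}. The first step is to convert the failures of \textbf{(i)} and \textbf{(ii)} into ``unmated'' conditions via Proposition~\ref{SmallDense}. With $(K,\varepsilon_1,\varepsilon_2,d)=(K_0,\varepsilon_{1,0},\varepsilon_{2,0},d_0)$, the failure of \textbf{(i)} forces $G$ to be $(K_0,\varepsilon_{1,0},\varepsilon_{2,0},d_0)$-unmated; with $(K,\varepsilon_1,\varepsilon_2,d)=(\tfrac{4}{3}\ell_0 K_0,\varepsilon_{1,0},\varepsilon_{1,0},d_0)$, the failure of \textbf{(ii)} forces $G$ to be $(\tfrac{4}{3}\ell_0 K_0,\varepsilon_{1,0},\varepsilon_{1,0},d_0)$-unmated. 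The first controls the $(\varepsilon_{2,0},d_0)$-mate count of the small vertices of $A$; the second controls the $(\varepsilon_{1,0},d_0)$-mate count of the moderate-degree vertices of $B$.

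Next, I would build the model. Let $A_0\subseteq A$ denote the $(K_0,d_0)$-small vertices and $B_0\subseteq B$ the $(\tfrac{4}{3}\ell_0 K_0,d_0)$-small vertices. A standard double-count using $|A|\ge \ell_0|B|$ and the degree hypothesis shows that $|A_0|\ge (1-\tfrac{\ell_0}{K_0})|A|$ and that $B\setminus B_0$ accounts for only a small fraction of the relevant edges; this is the origin of the $\tfrac{\ell_0}{K_0}$ loss term. For each $b\in B_0$ I would then choose $S_b\subseteq N_G(b)\cap A_0$ with $|S_b|=\ell_0$, pairwise disjoint across different $b$. Since $|A_0|\ge\ell_0|B_0|$ (up to the small loss above) and each $b\in B_0$ still has many neighbors in $A_0$ (recall $d_0\ge 1/\varepsilon_{2,0}\ge \ell_0^2$), a Hall-type / greedy argument yields such a family, and one can insist in addition that each $S_b$ contain few $(\varepsilon_{2,0},d_0)$-mate pairs, using that each small $a\in A_0$ has fewer than $\varepsilon_{1,0}d_0$ mates. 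Setting $X_b=\{b\}\cup S_b$ gives a connected blob of size $\ell_0+1$, and $(X_b)_{b\in B_0}$ is the desired model of a minor $H$ with $V(H)=B_0$.

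Finally, I would bound $\d(H)=\e(H)/|B_0|$ from below. Writing $A^*=\bigcup_{b\in B_0}S_b$, the number of $G$-edges strictly between distinct blobs is $E^{\mathrm{ext}}=\sum_{a\in A^*}(\deg_G(a)-1)-\lambda$, where $\lambda$ accounts for edges lost to $B\setminus B_0$ and is bounded by the $\tfrac{\ell_0}{K_0}$ argument above. Each edge $\{b,b'\}\in E(H)$ is counted in $E^{\mathrm{ext}}$ with multiplicity $|S_b\cap N(b')|+|S_{b'}\cap N(b)|\le 2|N(b)\cap N(b')|$, which is less than $2\varepsilon_{1,0}d_0$ whenever $b,b'\in B_0$ are not $(\varepsilon_{1,0},d_0)$-mates. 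The second unmated condition bounds the number of blob-center mate pairs, contributing the $2\varepsilon_{1,0}$ loss, while the first unmated condition controls the intra-$S_b$ overlaps $|N(a)\cap N(a')|$ in the inclusion--exclusion estimate $|N(S_b)|\ge \ell_0 d_0\bigl(1-\ell_0\varepsilon_{2,0}\bigr)$, contributing the $2\ell_0\varepsilon_{2,0}$ loss. Splitting the excess $E^{\mathrm{ext}}-\e(H)$ into mate and non-mate contributions and assembling yields $\e(H)\ge \tfrac{\ell_0^2}{\ell_0+1}|B_0|\,d_0\,\bigl(1-2\varepsilon_{1,0}-2\ell_0\varepsilon_{2,0}-\tfrac{\ell_0}{K_0}\bigr)$, which is \textbf{(iii)}. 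The main obstacle will be this final accounting: the three losses arise from three distinct mechanisms (high-degree exclusion, inter-blob mate collisions, intra-blob mate collisions), and assembling them with the exact constants claimed requires a coordinated choice of the $S_b$ so that mate pairs are simultaneously rare within each $S_b$ and among blob centers; a naive greedy suffices for existence of the model, but obtaining the stated density factor will likely need a more careful iterative or probabilistic selection that exploits the two unmated bounds as Markov-type controls.
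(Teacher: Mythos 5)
The paper does not prove Lemma~\ref{kclawDense}: it is imported verbatim as Theorem 3.6 of~\cite{Pos20LogLog}, so there is no in-paper proof to compare against. Evaluating your outline on its own terms, the opening move — applying Proposition~\ref{SmallDense} twice to convert the failures of (i) and (ii) into two unmated conditions (with $K$ scaled so that $3Kd_0$ matches the stated vertex bounds $4K_0d_0$ and $4\ell_0K_0d_0$) — is the natural one, and the overall shape (unmated conditions $\to$ disjoint $\ell_0$-claw model rooted at $B$ $\to$ inclusion--exclusion density count) is the right flavor for this kind of statement. However there are two substantive gaps.

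First, the claimed bound $|A_0|\ge\bigl(1-\tfrac{\ell_0}{K_0}\bigr)|A|$ does not follow from the double count you gesture at. A $(K_0,d_0)$-big vertex $a\in A$ satisfies $\deg_G(a)>K_0d_0$, so $|A\setminus A_0|\cdot K_0d_0<\e(G)$; but the hypotheses give only the lower bound $\e(G)\ge d_0|A|$ and no a priori upper bound (the graph could be nearly complete bipartite), so this inequality does not make $A\setminus A_0$ a small fraction of $A$. The same difficulty afflicts your bound on $B\setminus B_0$. The $\tfrac{\ell_0}{K_0}$ loss has to be earned by a different mechanism — for instance by observing that if high-degree vertices are not rare then a small dense subgraph already exists (conclusion (i) or (ii)) — and this case analysis is missing.

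Second, as you acknowledge yourself, the density accounting that produces the factor $\tfrac{\ell_0^2}{\ell_0+1}\bigl(1-2\varepsilon_{1,0}-2\ell_0\varepsilon_{2,0}-\tfrac{\ell_0}{K_0}\bigr)$ is the heart of the lemma, and your sketch stops short of it. The coordinated choice of the sets $S_b$ so that mate pairs are simultaneously rare within each $S_b$ and among the centers $b$ is exactly the step you flag as unresolved, and without specifying it the estimate $|N(S_b)|\ge\ell_0 d_0(1-\ell_0\varepsilon_{2,0})$ and the $2\varepsilon_{1,0}$ inter-blob loss remain plausible but unestablished. As written this is a reasonable roadmap toward Postle's Theorem 3.6, not a proof of it.
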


\section{Dense Subgraphs or Minors with Increased Density}\label{Shrub}

In this section we prove the following lemma which shows every graph must contain a dense subgraph, a bipartite subgraph such that all the vertices on one partition have large degree, or a bounded minor with increased density. 

\begin{restatable}{lem}{DenseThree}\label{DenseSubgraph3}
Let $K \ge k \ge 100$ be integers with $K\ge 4 k^{2}$. Let $\ell = \left\lceil \frac{k}{6} \right\rceil$, $\varepsilon_1 \in (0,\frac{1}{k}]$ and $\varepsilon_2 \in (0,\frac{1}{k}]$. Let $G$ be a graph with $d=\d(G) \ge \frac{k}{\min\{\varepsilon_1, \varepsilon_2\}}$. Then $G$ contains at least one of the following:
\begin{description}
	\item[(i)] a subgraph $H$ of $G$ with $\v(H) \le 3Kd$ and $\d(H)\ge \frac{\varepsilon_1 \varepsilon_2  d}{6Kk}$, or
	\item[(ii)] a bipartite subgraph $H=(X,Y)$ of $G$ with $|X| \ge \ell |Y|$ such that  every vertex in $X$ has at least $(1-6\varepsilon_1)d$ neighbours in $Y$, or
	\item[(iii)] a $k$-bounded minor $G'$ of $G$ with $\d(G') \ge k \left(1 - \frac{30}{k}\right) d.$
\end{description}
\end{restatable}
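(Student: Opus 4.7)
My plan is to apply Proposition~\ref{SmallDense} first to dispose of conclusion (i), and then in the unmated case, perform a dichotomy between producing the bipartite structure (ii) and constructing the bounded minor (iii). I first apply Proposition~\ref{SmallDense} to $G$ with parameters $(K, \varepsilon_1, \varepsilon_2, d)$. If case (i) of that proposition holds, the resulting subgraph $H$ satisfies $\v(H) \le 3Kd$ and $\e(H) \ge \varepsilon_1\varepsilon_2 d^2/2$, so
\[
\d(H) \ge \frac{\varepsilon_1\varepsilon_2 d}{6K} \ge \frac{\varepsilon_1\varepsilon_2 d}{6Kk},
\]
yielding conclusion (i). Otherwise $G$ is $(K, \varepsilon_1, \varepsilon_2, d)$-unmated. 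By iteratively deleting vertices of degree less than $d$ I may additionally assume $\delta(G) \ge d$, as this operation preserves both the density and the unmated property. Moreover, if $\v(G)\le 3Kd$ then $G$ itself satisfies conclusion (i) (since its density is $d$, easily above the required $\varepsilon_1\varepsilon_2 d/(6Kk)$), so I further assume $\v(G)>3Kd$.

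In the unmated case I consider the family of bipartite structures $(X_Y, Y)$ indexed by $Y\subseteq V(G)$, where $X_Y := \{v \in V(G)\setminus Y : |N(v)\cap Y|\ge (1-6\varepsilon_1)d\}$. If some $Y$ yields $|X_Y|\ge \ell |Y|$, then conclusion (ii) holds with $(X_Y,Y)$. Otherwise I construct a $k$-bounded minor $G'$ for conclusion (iii) as follows: since every vertex has at least $(1-\varepsilon_1)d$ non-mate neighbors, I greedily pack vertex-disjoint connected branch sets $X_1,\ldots,X_h$ of size $k$ built from non-mate edges, covering all but a small fraction of $V(G)$, so $h\approx \v(G)/k$. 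Contracting these branch sets produces a $k$-bounded minor $G'$ whose edge loss decomposes as (a) the intra-branch-set edges, totalling at most $h\binom{k}{2}\le \v(G)k/2$, which is comfortably below $\e(G)=d\v(G)$ because $d\ge k^2$; and (b) the multi-edges collapsed between distinct branch sets, which is controlled by the unmated condition, since within each $X_i$ every pair of vertices has fewer than $\varepsilon_2 d$ common neighbors. Using $\v(G)>3Kd\ge 12k^2d$, the main term $d\v(G)$ dominates, and the edge loss is at most $30\e(G)/k$, giving $\d(G')\ge k(1-30/k)d$.

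The main obstacle is establishing the sharp constants $\lceil k/6\rceil$, $1-6\varepsilon_1$, and $1-30/k$ in the conclusions. This requires a careful bookkeeping of all three sources of edge loss in the contraction (tree edges, extra intra-set edges, and inter-set multi-edge collisions), and crucially translating the failure of the bipartite construction into a non-concentration property strong enough to bound the multi-edge collisions. The interplay between $|Y|$, $|X_Y|$, and the number of collapsed multi-edges between branch sets is what forces the specific factors $1-6\varepsilon_1$ and $1-30/k$, and is the most delicate part of the argument.
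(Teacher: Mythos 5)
Your high-level skeleton is right — apply Proposition~\ref{SmallDense}, then in the unmated case trade off against (ii) or (iii) — and your proof of (i) is fine (in fact the paper applies the proposition with $\varepsilon_2/k$ in place of $\varepsilon_2$, which is what makes the later steps work, so your weaker unmated hypothesis is a first warning sign). But the core of the proposal has two genuine gaps.

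First, the branch-set construction is not sound as stated. You say you greedily build size-$k$ connected branch sets ``from non-mate edges'' and then bound the collapsed multi-edges by the assertion that ``within each $X_i$ every pair of vertices has fewer than $\varepsilon_2 d$ common neighbors.'' That latter claim does not follow: building a tree from non-mate edges only guarantees the tree-adjacent pairs are non-mates, not that every pair inside a branch set is non-mate. Worse, the inter-branch-set multi-edge loss is controlled by common neighbourhoods of pairs inside the \emph{same} branch set, and a greedy packing gives no control over this. The paper resolves this by not tracking mateness pairwise at all; instead it grows a forest $F$ of size-$k$ stars that is maximal subject to being $(c,d)$-\emph{clean} (the total edge loss under contraction is bounded by $c\,d\,\v(F)$), and the extension step (Claim~\ref{newclaim}) shows that a star center with enough remaining neighbours can always absorb one more leaf without violating cleanness. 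That incremental bookkeeping is the engine that produces the sharp $30/k$, and it is absent from your proposal.

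Second, you assert that the branch sets can be made to cover ``all but a small fraction of $V(G)$,'' but there is no reason a greedy packing achieves this, and the paper explicitly does not attempt it. After building the maximal clean star forest $F$, the set $A'$ of leftover small vertices can be as large as $\frac{2}{3}\v(G)$, and the paper \emph{deletes} $A'$ rather than covering it. The reason this deletion does not destroy too many edges is an extra structural argument: edges inside $A'$ are sparse (Claim~\ref{newAPrime}), and edges from $A'$ into $V(F)$ are concentrated on a set $C$ of at most one vertex per star (Claim~\ref{onlyone}). The bipartite alternative (ii) is then not some freely chosen $(X_Y,Y)$ as in your sketch: it is the specific pair $(A_1, B\cup C)$ arising from this dichotomy (vertices of $A'$ that send almost all their degree into the small set $B\cup C$), and it is exactly when that set $A_1$ is \emph{not} large that one can bound $|A'|$ and make the deletion affordable. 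In short, the ``delicate bookkeeping'' you defer is the actual mathematical content of the lemma, and the architecture you propose (cover almost everything, bound collisions via pairwise non-mateness) would not deliver it.
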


\begin{proof}
For, otherwise, let $G$ be a minimal counterexample, i.e. $G$ satisfies none of (i), (ii) or (iii), and $\v(G)$ is minimized.
For any proper subgraph $H$ of $G$, since $H$ is not a minimal counterexample, we may assume $\d(H) < \d(G)$.
In particular, for any vertex $v \in V(G)$, $\d(G[V(G) \setminus \{v\}]) < \d(G)$.
This implies $d(v) > \d(G)$, so $\delta(G) > \d(G)$.

First we apply Proposition \ref{SmallDense} to $G$ with $(K, \varepsilon_1, \varepsilon_2, d)_{\ref{SmallDense}} = (K, \varepsilon_1, \frac{\varepsilon_2}{k},  d )$.
If Proposition \ref{SmallDense} (i) holds, then there exists a subgraph $H$ of $G$ such that $\v(H) \le 3K d)$, $\e(H) \ge \frac{\varepsilon_1 \varepsilon_2 d^2}{2k}$. 
So $\d(H) \ge \frac{\varepsilon_1 \varepsilon_2 d}{6Kk}$ and conclusion (i) holds, a contradiction. 
Hence, Proposition \ref{SmallDense} (ii) holds, i.e. $G$ is $(K, \varepsilon_1, \frac{\varepsilon_2}{k}, d)$-unmated.  

Let $A = \{v: \deg_G(v) \le K d \}$ and $B = V(G) \backslash A$.
Since $K d |B| \le 2\e(G) = 2 d \v(G)$, we have $|B| \le \frac{2}{K}\v(G) \le \frac{1}{2k^2}\v(G)$.
%Moreover, $$|A| \ge \v(G) - |B| \ge (1-\frac{1}{2k^2})v(G) \ge 0.99 \v(G) \ge 2l \cdot \frac{1}{2k^2}\v(G) \ge 2l |B|.$$
Since $G$ is $(K, \varepsilon_1, \frac{\varepsilon_2}{k}, d)$-unmated, $v$ has fewer than $\varepsilon_1 d$ $(\frac{\varepsilon_2}{k}, d)$-mates in $G$ for every $v \in A$.

Let $c = 4\varepsilon_2$ and $F$ be a maximal $(K,d)$-small, $(c,d)$-clean forest where each component of $F$ is a star of size $k$.  
Since $F$ is $(K,d)$-small, we have $V(F) \subseteq A$.

\medskip

\begin{claim}\label{newclaim}
If $F_0$ is a star forest in $G$ and $v \in V(G) \backslash V(F_0)$ has at least $2\varepsilon_1 d$ neighbours in $A \backslash V(F_0)$,
then there exists a star $T$ of size $k$ in $A \backslash V(F_0)$ with center $v$ such that 
$$\e(G/E(F_0)) - \e(G/(E(F_0) \cup E(T)) ) \le 2k\varepsilon_2 d.$$
\end{claim}

\begin{proof}
For, otherwise, let $S$ be a maximal star in $A \backslash V(F_0)$ with center $v$ such that
$$\e(G/E(F_0)) - \e(G/(E(F_0) \cup E(S)) ) \le 2 (\v(S) - 1) \varepsilon_2 d.$$
Such $S$ exists as $S$ could be $\{v\}$.
By assumption, $\v(S) \le k - 1$.

Let $V(S) = \{v_0,v_1,\dots,v_s\}$ with $v_0 = v$ and $s \le k-2$.
Let $U = \{u_1,\dots,u_n\}$ where for each $j \in [n]$, $u_j \in A \backslash (V(F_0) \cup V(S))$ and there exists $i \in \{0,1,\dots, s\}$ such that $u_j$ is an $(\frac{\varepsilon_2}{k}, d)$-mate of $v_i$.
Since $G$ is $(K, \varepsilon_1, \frac{\varepsilon_2}{k}, d)$-unmated, $v_i$ has fewer than $\varepsilon_1 d$ $(\frac{\varepsilon_2}{k}, d)$-mates in $G$ for $i \in \{0,1,\dots, s\}$.
So $n < k \varepsilon_1 d$.

Let $G' = G/(E(F_0) \cup E(S))$ and $v_S$ be the vertex in $V(G')$ corresponding to $S$.
For $u \in A \backslash (V(F_0) \cup V(S) \cup U)$, by definition of $U$, $u$ has fewer than $\frac{\varepsilon_2}{k}d$ common neighbours with $v_i$ in $G$ for all $i \in \{0,1,\dots, s\}$.
So $u$ has fewer than $(s+1) \frac{\varepsilon_2}{k}d < \varepsilon_2 d$ common neighbours with $v_S$ in $G'$.

Now we show that at most $\varepsilon_1 d$ vertices in $U$ have at least $\varepsilon_2 d$ common neighbours with $v_S$ in $G'$.
Consider the following auxiliary graph.
Let $W$ be an edge-weighted complete bipartite graph with vertex partition $V(S) \cup U$.
For $i \in \{0,1,\dots, s\}$ and $j \in [n]$, we define the edge weight $w_{ij}$ to be the number of common neighbours between $v_i$ and $u_j$.

We claim that for $i \in \{0,1,\dots, s\}$, $\sum_{j \in [n]} w_{ij} < \frac{\varepsilon_1 \varepsilon_2 d^2}{k}$.
For, otherwise, suppose there exists some $i \in \{0,1,\dots, s\}$ such that $\sum_{j \in [n]} w_{ij} \ge \frac{\varepsilon_1 \varepsilon_2 d^2}{k}$.
Let $H = G[\{v_i\} \cup N(v_i) \cup U]$.
Then $\v(H) \le 1 + Kd + n < 2Kd$.
By definition of $W$, $\e(H) \ge \sum_{j \in [n]} w_{ij} \ge \frac{\varepsilon_1 \varepsilon_2 d^2}{k}$.
Hence, $\d(H) \ge \frac{\varepsilon_1 \varepsilon_2 d}{2Kk}$ and conclusion (i) holds, a contradiction.

Let $\Gamma = \{j \in [n]: \sum_{i \in \{0,1,\dots, s\}} w_{ij} \ge \varepsilon_2 d\}$.
We have 
$$|\Gamma| \varepsilon_2 d \le \sum_{i \in \{0,1,\dots, s\}} \sum_{j \in [n]} w_{ij} \le (s+1) \cdot \frac{\varepsilon_1 \varepsilon_2 d^2}{k} < \varepsilon_1 \varepsilon_2 d^2.$$
So $|\Gamma| < \varepsilon_1 d$.
This implies that fewer than $\varepsilon_1 d$ vertices in $U$ have at least $\varepsilon_2 d$ common neighbours with $v_S$ in $G'$.

Since $v$ has at least $2 \varepsilon_1 d$ neighbours in $A \backslash V(F_0)$, there exists a vertex $u \in A \backslash (V(F_0) \cup V(S))$ that is not a $(\varepsilon_2, d)$-mate of $v_S$ in $G'$.

Let $S'$ be the star with $V(S') = V(S) \cup \{u\}$ and $E(S') = E(S) \cup \{uv\}$.
Note that $\v(S') > \v(S)$ and 
\begin{equation*}
\begin{split}
\quad \quad & \e(G/E(F_0)) - \e(G/(E(F_0) \cup E(S')) ) \\
= \quad& (\e(G/E(F_0)) - \e(G/(E(F_0) \cup E(S)) )) + (\e(G/(E(F_0) \cup E(S)) ) - \e(G/(E(F_0) \cup E(S')) )) \\
\le \quad& (2 (\v(S) - 1) \varepsilon_2 d) + (1 +  \varepsilon_2 d) \\
\le \quad& 2 (\v(S') - 1) \varepsilon_2 d.
\end{split}
\end{equation*}
This is a contradiction to the maximality of $S$.
\end{proof}

\medskip

Next we work on the graph obtained from $G$ by contracting $F$.
Let $G_0=G/E(F)$. 
Since each component of $F$ is a star of size $k$, $G_0$ is a $k$-bounded minor of $G$. 
Let $A' = A \setminus V(F)$.
By Claim \ref{newclaim} and the maximality of $F$, we can show that the edges between $F$ and $A'$ are sparse in the following claim.

\medskip

\begin{claim}\label{onlyone}
Every component $T$ of $F$ has at most one vertex with at least $3\varepsilon_1 d$ neighbours in $A'$ in graph $G$.
\end{claim}

\begin{proof}
For, otherwise, suppose there exist distinct $u_1, u_2\in V(T)$ such that $|N_G(u_i)\cap A'|\ge 3\varepsilon_1 d$. Let $F_0 = F\setminus V(T)$. Note that since $E(F_0)\subseteq E(F)$, we have that $\e(G/E(F_0))\ge \e(G/E(F)).$

Since $F$ is $(c,d)$-clean, we have
$$\e(G)-\e(G/E(F_0)) \le \e(G)-\e(G/E(F)) \le c d  \v(F).$$
Since $u_1$ has at least $3\varepsilon_1 d$ neighbours in $A'$, by Claim~\ref{newclaim} there exists a star $T_1$ of size $k$ in $A\setminus V(F_0)$ with center $u_1$ such that 
$$\e(G/E(F_0)) - \e(G/(E(F_0)\cup E(T_1))) \le 2k \varepsilon_2 d.$$ 

Let $F_1$ be the union of $F_0$ and $T_1$, 
i.e. $V(F_1) = V(F_0) \cup V(T_1)$ and $E(F_1) = E(F_0) \cup E(T_1)$. 
Since $u_2$ has at least $3 \varepsilon_1 d$ neighbours in $A'$,  $u_2$ has at least $3 \varepsilon_1 d - k \ge 2 \varepsilon_1 d$ neighbours in $A'\setminus V(T_1)$.
Again by Claim~\ref{newclaim}, there exists a star $T_2$ of size $k$ in $A \setminus V(F_1)$ with center $u_2$ such that 
$$\e(G/E(F_1)) - \e(G/(E(F_1)\cup E(T_2))) \le 2k \varepsilon_2 d.$$

Let $F_2$ be the union of $F_1$ and $T_2$, 
i.e. $V(F_2) = V(F_1) \cup V(T_2)$ and $E(F_2) = E(F_1) \cup E(T_2)$. 
Note that $\v(F_2) = \v(F_0) + \v(T_1) + \v(T_2) = \v(F_0) + 2k = \v(F) + k$. 
Moreover,
\begin{equation*}
\begin{split}
\quad \quad & \e(G) - \e(G/E(F_2)) \\
=\quad & (\e(G) - \e(G/E(F_0))) + (\e(G/E(F_0)) - \e(G/E(F_1))) + (\e(G/E(F_1)) - \e(G/E(F_2))) \\
\le \quad & cd\v(F) + 2k\varepsilon_2d + 2k\varepsilon_2d \\
= \quad & 4 (\v(F) + k)\varepsilon_2d \\
= \quad & cd\v(F_2).
\end{split}
\end{equation*}
So $F_2$ is $(c,d)$-clean. 
Note that $F_2$ is a forest where every component is a star of size $k$ and $F_2$ is also $(K,d)$-small.
This contradicts the maximality of $F$.
\end{proof}

\medskip

By similar argument, we can show that edges within $A'$ are also sparse in the following claim.
 
\medskip 
 
\begin{claim}\label{newAPrime}
Every vertex $v$ in $A'$ has at most $2 \varepsilon_1 d$ neighbours in $A'$ in $G$.
\end{claim}

\begin{proof}
For, otherwise, suppose $v$ has at least $2\varepsilon_1 d$ neighbours in $A'$ in $G$. 
By Claim~\ref{newclaim}, there exists a star $T$ of size $k$ in $A\setminus V(F_0)$ with center $v$ such that 
$$\e(G/E(F)) - \e(G/(E(F)\cup E(T))) \le 2k \varepsilon_2 d \le c d k.$$ 
Let $F'$ be the union of $F$ and $T$,
i.e. $V(F') = V(F) \cup V(T)$ and $E(F') = E(F) \cup E(T)$. 
Then $F'$ is a $(K,d)$-small forest where every component is a star of size $k$. 
Moreover, $\v(F')=\v(F)+k$. 
Since $F$ is $(c,d)$-clean, we have
$$\e(G)-\e(G/E(F)) \le c d \v(F).$$
Hence, 
$$\e(G)-\e(G/E(F')) \le c d \v(F) + cdk = cd\v(F').$$
So $F'$ is also $(c,d)$-clean.
This contradicts the maximality of $F$.
\end{proof}

\medskip

Let $C$ be the set of vertices in $F$ with at least $3 \varepsilon_1 d$ neighbours in $A'$. 
By Claim~\ref{onlyone}, every component of $F$ has at most one vertex in $C$. 
Hence $|C|\le \frac{1}{k} \v(G)$.
Now we show that $A'$ is small.

\medskip

\begin{claim}\label{APrimeSize}
$|A'|\le \frac{2\v(G)}{3}$.
\end{claim}			
\begin{proof}
For, otherwise, suppose $|A'| > \frac{\v(G)}{2}$.
First we aim to construct a bipartite graph that satisfies conclusion (ii).
Let $A_1 = \{v\in A', |N_G(v)\cap (B\cup C)| \ge (1 - 6 \varepsilon_1) \d(G) \}$ and let $A_2 = A'\setminus A_1$. 
Suppose that $|A_1| \ge \left(\frac{1}{3} + \frac{2}{k}\right)  \v(G)$. 
Since $|B\cup C|\le \frac{2}{k} \v(G)$, we have that $|A_1| \ge \left(\frac{k}{6}+1\right) |B \cup C| \ge \ell |B \cup C|$ and hence bipartite graph $(A_1, B \cup C)$ satisfies conclusion (ii), a contradiction.  

So we may assume that $|A_1| < \left(\frac{1}{3} + \frac{2}{k}\right) \v(G)$. 
By Claim~\ref{newAPrime}, every vertex in $A'$ has at most $2\varepsilon_1 d$ neighbors in $A'$. 
Since $\delta(G)\ge \d(G) = d$, it follows that every vertex in $A_2$ has at least $4\varepsilon_1 d$ neighbors in $V(F)\setminus C$. 
Hence
$$ \e(G(A_2,V(F)\setminus C)) \ge 4\varepsilon_1 d \cdot |A_2|.$$
By definition of $C$, we have that
$$ \e(G(A_2,V(F)\setminus C)) \le 3\varepsilon_1 d \cdot |V(F) \setminus C| \le 3\varepsilon_1 d (\v(G) - |A_1| - |A_2|).$$
Hence $|A_2|\le  \frac{3}{7} (\v(G) - |A_1|)$. 
Therefore, we have 
$$|A'| = |A_1|+|A_2| \le \frac{3}{7} \v(G) + \frac{4}{7} |A_1| \le \frac{3}{7} \v(G) + \frac{4}{7} \left(\frac{1}{3} + \frac{2}{k}\right) \v(G) \le \frac{2\v(G)}{3}.$$
\end{proof}

\medskip

Finally let $G' = G_0 \backslash A'$. 
In the rest of the proof, we show that $G'$ satisfies conclusion (iii). 
Note that by Claim~\ref{APrimeSize}, we have that $|A'\cup B\cup C| \le (\frac{2}{3} + \frac{2}{k}) \v(G) < \v(G)$.
Hence $F$ is nonempty. 
In addition, $G[A'\cup B\cup C]$ is a proper subgraph of $G$. 
So $\d(G[A'\cup B\cup C]) < d$ and
$$\e(G[A'\cup B\cup C]) < d \cdot |A'\cup B \cup C| \le \left(\frac{2}{k}\v(G) + |A'|\right) d.$$
Moreover, by definition of $C$ we have
$$\e(G(A', V(F)\setminus C)) \le 3\varepsilon_1 d \cdot |V(F)\setminus C| < 3\varepsilon_1 d \cdot \v(G).$$
Let $a' = \frac{|A'|}{\v(G)}$. Hence
$$\e(G)-\e(G\setminus A') \le \left(\frac{2}{k} +3\varepsilon_1 + a'\right) d \cdot \v(G).$$
Since $F$ is $(c,d)$-clean, we have that 
$$\e(G) - \e(G_0) \le c d \cdot \v(F) \le 4\varepsilon_2 d \cdot \v(G).$$
Hence
$$\e(G\setminus A') - \e(G') \le 4\varepsilon_2 d \cdot \v(G),$$
and so we can lower bound $\e(G)'$ by
$$\e(G) - \e(G') \le  \left(\frac{2}{k} +3\varepsilon_1 + 4\varepsilon_2 + a'\right) d \cdot \v(G).$$
Moreover, since $G'$ is $k$-bounded minor, we can upper bound $\v(G')$ by
$$\v(G') = |B| + |C| \le \frac{1}{2k^2}\cdot \v(G) + \frac{\v(G)-|A'|}{k} < \frac{\v(G)}{k} \left( \frac{k+1/2}{k} - a'\right).$$
Thus
$$\d(G') \ge \frac{\e(G) - \left(\frac{2}{k} +3\varepsilon_1 + 4\varepsilon_2+a'\right)\cdot d \cdot \v(G)}{ \frac{\v(G)}{k} \left( \frac{k+1/2}{k} - a'\right) }
= k  d \cdot \frac{ 1 - \left(\frac{2}{k} +3\varepsilon_1 + 4\varepsilon_2\right) - a'}{\frac{k+1/2}{k} - a'}.$$
 
Since $a' \le \frac{2}{3}$, we have
\begin{align*}
\d(G') &\ge k d \cdot \frac{ 1 - \left(\frac{2}{k} +3\varepsilon_1 + 4\varepsilon_2\right) - \frac{2}{3}}{\frac{k+1/2}{k} - \frac{2}{3}}\\
&= k  d \cdot \frac{ 1 - 3\left(\frac{2}{k} +3\varepsilon_1 + 4\varepsilon_2\right) }{1 + \frac{3/2}{k}}\\
&= k  d \cdot \left(1 - 3\left(\frac{2}{k} +3\varepsilon_1 + 4\varepsilon_2\right)\right) \cdot \left(1 - \frac{1}{k}\right) \\
&\le k  d \cdot \left(1 - \frac{30}{k}\right) ,
\end{align*}
since $\varepsilon_1 \le \frac{1}{k}$ and $\varepsilon_2 \le \frac{1}{k}$. But now conclusion (iii) holds, a contradiction.
\end{proof}

\medskip

\section{Proof of Lemma \ref{t:newforced}}\label{s:outline}

In this section, we prove Lemma~\ref{t:newforced}.
First, we show the following lemma that is analogous to Theorem 2.1 in \cite{Pos20LogLog}.
The proof is similar, but we include it for completeness.

\medskip

\begin{restatable}{lem}{Dense}\label{DenseSubgraph2}
Let $k\ge 100$ be an integer. Let $G$ be a graph with $d=\d(G) \ge k^2$. Then $G$ contains at least one of the following:
	\begin{description}
		\item[(i)] a subgraph $H$ with $\v(H)\le 12 k^{3} d$ and $\d(H)\ge \frac{d}{24 k^5}$, or
	  \item[(ii)] an $m$-bounded minor $G'$ with $\d(G') \ge m \cdot \left(1-\frac{30}{m}\right) \cdot d$ for some integer $m\in [\frac{k}{6},k]$.
		\end{description}  
\end{restatable}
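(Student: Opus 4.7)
\medskip

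\noindent \textbf{Proof proposal.} The plan is to apply Lemma~\ref{DenseSubgraph3} first, and then handle the bipartite case of its conclusion by a second application of Lemma~\ref{kclawDense}. Both steps are set up so that the ``dense subgraph'' outcomes feed conclusion (i) of our lemma, while the ``bounded minor'' outcomes feed conclusion (ii).

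\medskip

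\noindent \emph{Step 1.} Apply Lemma~\ref{DenseSubgraph3} to $G$ with parameters $K := 4k^2$, $\varepsilon_1 = \varepsilon_2 := 1/k$, and $\ell := \lceil k/6 \rceil$. The hypotheses $K \ge 4k^2$, $\varepsilon_1, \varepsilon_2 \in (0,1/k]$ and $d \ge k/\min(\varepsilon_1,\varepsilon_2) = k^2$ hold by assumption. If outcome (i) of Lemma~\ref{DenseSubgraph3} triggers, then $\v(H) \le 3Kd = 12k^2 d \le 12k^3 d$ and
\[
\d(H) \;\ge\; \frac{\varepsilon_1 \varepsilon_2 d}{6Kk} \;=\; \frac{d}{24 k^5},
\]
which is exactly our conclusion (i). If outcome (iii) of Lemma~\ref{DenseSubgraph3} triggers, then we obtain a $k$-bounded minor of density at least $k(1-30/k)d$, giving our conclusion (ii) with $m = k$. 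So we may assume that outcome (ii) holds, namely a bipartite graph $(X,Y)$ with $|X| \ge \ell|Y|$ where every vertex of $X$ has at least $d_0 := (1-6/k)d$ neighbors in $Y$.

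\medskip

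\noindent \emph{Step 2.} Apply Lemma~\ref{kclawDense} to $(X,Y)$ with $\ell_0 := \ell = \lceil k/6 \rceil$, $K_0 := \ell_0(\ell_0+1)$, $\varepsilon_{1,0} := 1/\ell_0$, $\varepsilon_{2,0} := 1/\ell_0^2$ and $d_0 = (1-6/k)d$. The hypothesis $d_0 \ge 1/\varepsilon_{2,0} = \ell_0^2$ follows from $d \ge k^2$ and $\ell_0 \le k/6+1$, and the other hypotheses hold by choice of the parameters. In outcomes (i) and (ii), the resulting subgraph has $\v(H) \le 4\ell_0 K_0 d_0 = O(\ell_0^3 d) \le 12k^3 d$ (using $\ell_0 \le k/6+1$ and $k\ge 100$), and the density $\ge \min\{\varepsilon_{1,0}\varepsilon_{2,0}, \varepsilon_{1,0}^2\} d_0/(8K_0) = \Omega(d/\ell_0^5)$ which exceeds $d/(24k^5)$; so our conclusion (i) holds.

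\medskip

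\noindent \emph{Step 3.} In outcome (iii) of Lemma~\ref{kclawDense} we get an $(\ell_0+1)$-bounded minor, so we set $m := \ell_0 + 1$. Note that $m \in [k/6, k]$ since $k \ge 100$. For the density, one computes
\[
\frac{\ell_0^2}{\ell_0+1}\!\left(1 - 2\varepsilon_{1,0} - 2\ell_0\varepsilon_{2,0} - \frac{\ell_0}{K_0}\right) d_0 \;=\; \frac{(m-1)^2}{m}\!\left(1 - \frac{5}{m-1}\right)\!\left(1 - \frac{6}{k}\right) d,
\]
and expanding $(m-1)^2/m = m - 2 + 1/m$ and estimating the correction factors (using $m \le k$) shows this is at least $m(1 - 30/m)d$, matching our conclusion (ii).

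\medskip

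The main obstacle is purely bookkeeping: choosing $K$, $\varepsilon_1$, $\varepsilon_2$ in Step 1 and $K_0$, $\varepsilon_{1,0}$, $\varepsilon_{2,0}$ in Step 2 so that (a) all four outcomes of the two lemmas land in one of the two desired conclusions, (b) the vertex count in the subgraph cases stays below $12k^3 d$, (c) the density in the subgraph cases stays above $d/(24k^5)$, and (d) the polynomial loss $\ell_0^2/(\ell_0+1)$ in Lemma~\ref{kclawDense}(iii) together with the $(1-6/k)$ factor from passing to $d_0$ still leaves a slack of $30/m$. The calculation $(m-1)^2/m \cdot (1 - O(1/m))(1 - O(1/k)) \ge m - 30$ is the tightest of these, but it works comfortably for $k \ge 100$.
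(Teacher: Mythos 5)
Your proposal is correct and follows essentially the same two-step plan as the paper: first apply Lemma~\ref{DenseSubgraph3} with $K=4k^2$, $\varepsilon_1=\varepsilon_2=1/k$, then handle the bipartite outcome by applying Lemma~\ref{kclawDense} with $\ell_0\approx k/6$, $K_0=\ell_0(\ell_0+1)$, $\varepsilon_{1,0}=1/\ell_0$, $\varepsilon_{2,0}=1/\ell_0^2$, routing the dense-subgraph outcomes to (i) and the bounded-minor outcomes to (ii). Your choice of $\ell_0=\lceil k/6\rceil$ is in fact slightly cleaner than the paper's, which uses $k/6$ directly and so sets $m=\ell_0+1$ without guaranteeing integrality.
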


\begin{proof}[Proof of Theorem~\ref{DenseSubgraph2}]
	
We apply Lemma~\ref{DenseSubgraph3} to $G$ with $K=4 k^{2}$ and $\varepsilon_1=\frac{1}{k}$ and $\varepsilon_2=\frac{1}{k}$. 

If Lemma~\ref{DenseSubgraph3}(i) holds, $G$ contains a subgraph $H$ with $\v(H)\le 3Kd = 12 k^2 d$ and $\d(H)\ge \frac{\varepsilon_1 \varepsilon_2  d}{6Kk} = \frac{d}{24k^5}$. So (i) holds. 
If Lemma~\ref{DenseSubgraph3}(iii) holds, $G$ contains a $k$-bounded minor $G'$ with $\d(G') \ge k \left(1 - \frac{30}{k}\right)  d.$ So (ii) holds with $m=k$.

We may assume that Lemma~\ref{DenseSubgraph3}(ii) holds, 
i.e., there exists a bipartite subgraph $H=(X,Y)$ with $|X| \ge \frac{k}{6} |Y|$ such that every vertex in $X$ has at least $(1-6\varepsilon_1)d$ neighbours in $Y$. 
Now we apply Lemma~\ref{kclawDense} to $H$ with $(d_0,l_0,K_0, \varepsilon_{1,0},\varepsilon_{2,0})_{\ref{kclawDense}}= ((1-6\varepsilon_1)d,\frac{k}{6},\frac{k}{6}(\frac{k}{6}+1),\frac{1}{k/6},\frac{1}{(k/6)^2})$. 
Note that $d_0\ge d/2$ since $k\ge 12$ and hence $d_0\ge k^2/2 \ge \frac{1}{\varepsilon_{2,0}}$ as needed.

If Lemma~\ref{kclawDense}(i) holds for $H$, $H$ contains a subgraph $H_0$ with $\v(H_0) \le 4 K_0 d_0 \le 4k^2d$ and $\e(H_0)\ge \varepsilon_{1,0} \varepsilon_{2,0}\frac{d_0^2}{2}$. 
Then 
$\d(H_0) \ge \frac{\varepsilon_{1,0} \varepsilon_{2,0} d_0}{8K_0} \ge \frac{d}{16 l_0^5} \ge \frac{d}{24 k^5},$
so (i) holds.

If Lemma~\ref{kclawDense}(ii) holds for $H$, $H$ contains  a subgraph $H_0$ of $H$ with $\v(H_0) \le 4 \ell_0 K_0 d_0 \le 4k^3d$ and $\e(H_0)\ge \varepsilon_{1,0}^2 \frac{d_0^2}{2}$. 
Then
$\d(H_0)\ge \varepsilon_{1,0}^2 \frac{d_0}{8l_0 K_0} \ge \frac{d}{16l_0^5} \ge \frac{d}{24 k^5},$
so (i) holds.

Finally we may assume Lemma~\ref{kclawDense}(iii) holds, 
i.e., $H$ contains an $(\ell_0 +1)$-bounded minor $H_0$ with 
\begin{align*}
d(H_0) &\ge \frac{\ell_0^2}{\ell_0+1}\left(1-2\varepsilon_{1,0} - 2\ell_0\varepsilon_{2,0} -\frac{\ell_0}{K_0}\right)d_0\\
&\ge  (\ell_0 + 1) \cdot \left(1 - \frac{1}{l_0+1} \right)^2 \left(1-\frac{5}{\ell_0}\right)\cdot \left(1-6\varepsilon_1\right) \cdot d\\
&\ge (\ell_0+1) \cdot \left(1 -\frac{30}{\ell_0+1}\right)\cdot d,\\
\end{align*}
since $\ell_0 = \frac{k}{6}$. 
Therefore (ii) holds with $G'=H_0$ and $m=\ell_0+1$.
\end{proof}

\medskip

Now we are ready to derive \cref{t:newforced} from \cref{DenseSubgraph2}.

\begin{proof}[Proof of \cref{t:newforced}]
Let $C_{\ref{t:newforced}} = 2^{50}$. We proceed by induction on $s$. If $s\le 1$, then $J=G$ is a minor of $G$ with $\d(J)=\d(G) \ge s \d(G) = D$ and (i) holds as desired. So we may assume that $s > 1$. Hence $D > \d(G) \ge C_{\ref{t:newforced}}$. 

If $g_{\ref{t:newforced}}(s) \ge 2 \d(G)$, then let $H$ be the graph of a single edge $uv$ where $uv\in E(G)$ and (ii) holds since 
$\v(H) = 2 < C_{\ref{t:newforced}} < D^2$
and $\d(H) = 1/2$. 
So we may assume that $\d(G) > g_{\ref{t:newforced}}(s)/2$.

Let $k = \frac{1}{2} \cdot (C_{\ref{t:newforced}})^{\frac{1}{5}} \cdot (1+\log s) = 2^{9} \cdot (1+\log s)$. 
Note that $g_{\ref{t:newforced}}(s) = (2k)^5$.
Since $\log s\ge 0$, we have that 
$k \ge 512.$ 
Moreover, $\d(G) \ge g_{\ref{t:newforced}}(s)/2 \ge (2k)^5/ 2 \ge k^2$.

Now we apply \cref{DenseSubgraph2} to $G$ and $k$. 
Note that $\d(G) \ge k^{2}$ as needed.
If \cref{DenseSubgraph2}(i) holds, $G$ contains a subgraph $H$ with $\v(H)\le 12 k^{3} \d(G)$ and $\d(H)\ge \frac{\d(G)}{24k^5}$. 
Note that
$$\v(H) \le 12 k^{3} \d(G) \le g_{\ref{t:newforced}}(s) \d(G) \le g_{\ref{t:newforced}}(s) \frac{D^2}{\d(G)}$$
and furthermore
$$\d(H) \ge \frac{\d(G)}{24 k^{5}} \ge \frac{\d(G)}{g_{\ref{t:newforced}}(s)}.$$
Then (ii) holds as desired.

So we may assume that \cref{DenseSubgraph2}(ii) holds,
i.e., $G$ contains an $m$-bounded minor $G'$ with 
$\d(G') \ge m \left(1-\frac{30}{m}\right) \d(G)$
for some integer $m\in [\frac{k}{6}, k]$. 
Let $s' = D/\d(G')$. Note that since $k \ge 512$, we have that $m \ge \frac{k}{6} \ge 60$. Hence
$$\d(G') \ge m  \left(1-\frac{30}{m}\right) \d(G) \ge \frac{m}{2}  \d(G) > \d(G),$$
and 
$$s' \le \frac{s}{m \left(1-\frac{30}{m}\right)} \le \frac{2s}{m} <  s.$$

Since $s' < s$, we have by induction that at least one of (i) or (ii) holds for $G'$. 
If (i) holds for $G'$,
i.e., $G'$ contains a minor $J$ with $\d(J)\ge D$. 
Then $J$ is also a minor of $G$ and (i) holds for $G$.

So we may assume that (ii) holds for $G'$,
i.e., $G'$ contains a subgraph $H'$ with 
$\v(H') \le g_{\ref{t:newforced}}(s') \frac{D^2}{\d(G')}$
and 
$\d(H') \ge \frac{\d(G')}{g_{\ref{t:newforced}}(s')}.$ 
Note that $H'$ corresponds to a subgraph $H$ of $G$ with $\v(H) \le m \v(H')$ and $\e(H)\ge \e(H')$. Then
\begin{align*}
\v(H) &\le m \v(H') \le m g_{\ref{t:newforced}}(s') \frac{D^2}{\d(G')} \le \left(\frac{g_{\ref{t:newforced}}(s')}{1-\frac{30}{m}}\right) \frac{D^2}{d(G)}.
\end{align*}
and
$$\d(H) = \frac{\e(H)}{\v(H)} \ge \frac{\e(H')}{m \v(H')} = \frac{\d(H')}{m} \ge \frac{\d(G')}{m g_{\ref{t:newforced}}(s')} \ge \left(\frac{1-\frac{30}{m}} {g_{\ref{t:newforced}}(s')} \right) d(G).$$

In the rest of the proof, we show that $H$ satisfies (ii).
Since 
$m \ge \frac{k}{6} \ge 30(1+\log s)$,
we have
$$\frac{1}{1-\frac{30}{m}} \le 1 + \frac{60}{m} \le 1 + \frac{2}{1+\log s}.$$
On the other hand, since $m \ge \frac{k}{6} > e^3$, we have
\begin{align*}
\log s' &\le \log \left(\frac{2s}{m} \right) \le \log(s) + 1 - \log(m) \le \log(s)-2.
\end{align*}
Thus
$$\frac{ g_{\ref{t:newforced}}(s') } {g_{\ref{t:newforced}}(s)} \le \frac{(1+\log s')^{5}}{(1+\log s)^{5}} \le \frac{ 1+\log s'}{1+\log s} \le \frac{ 1+\log(s) - 2}{1+\log s} = 1 - \frac{2}{1+\log s}.$$
Now we have
$$\frac{g_{\ref{t:newforced}}(s')}{1-\frac{30}{m}} \le \left(1 - \frac{2}{1+\log s}\right)\left(1+\frac{2}{1+\log s}\right) g_{\ref{t:newforced}}(s) \le g_{\ref{t:newforced}}(s).$$
Hence,
$\v(H) \le  g_{\ref{t:newforced}}(s) \frac{D^2}{\d(G)}$ and
$\d(H) \ge \frac{\d(G)}{g_{\ref{t:newforced}}(s)}.$
Therefore, (ii) holds.
\end{proof}

\bibliographystyle{abbrv}
\bibliography{hadwiger_improved}

\end{document}